\newtheorem{proposition}{Proposition}
\newtheorem{lemma}{Lemma}
\newtheorem{theorem}{Theorem}
\newtheorem{assumption}{Assumption}
\newtheorem{remark}{Remark}
\newtheorem{problem}{Problem}
\newtheorem{definition}{Definition}
\newcommand*{\QEDB}{\null\nobreak\hfill\ensuremath{\square}}%
\title{\LARGE \bf
A time-varying matrix solution \\ to the Brockett decentralized stabilization problem}
\author{Zhiyong Sun
\thanks{Control Systems Group, Department of Electrical Engineering, Eindhoven University of Technology, Eindhoven, The Netherlands. Email: \texttt{sun.zhiyong.cn@gmail.com, z.sun@tue.nl}}%
}
\begin{document}

\maketitle
\thispagestyle{empty}
\pagestyle{empty}


\begin{abstract}
This paper proposes a time-varying matrix solution to the Brockett stabilization problem. The key matrix condition shows that if the system matrix product $CB$ is a Hurwitz H-matrix, then there exists a time-varying diagonal gain matrix $K(t)$ such that the closed-loop minimum-phase linear system with decentralized output feedback is exponentially convergent. The proposed solution involves several analysis tools such as diagonal stabilization properties of special matrices, stability conditions of diagonal-dominant linear systems, and solution bounds of linear time-varying integro-differential systems. A review of other solutions to the general Brockett stabilization problem (for a general unstructured time-varying gain matrix $K(t)$) and a comparison study are also provided. 
\end{abstract}

\section{Introduction}
In \cite{brockett1999stabilization}, Brockett proposed the following stabilization problem
\begin{itemize}
    \item ``Given  a family of constant matrices 
    \begin{align}
        (A, B_1, B_2, \cdots B_r, C_1, C_2,\cdots, C_r)
    \end{align}
    under what circumstances do there exist \textit{time-dependent} matrices $$K_1(t),K_2(t),\cdots, K_r(t)$$ such that the system
    \begin{align}
        \dot x(t) = Ax(t) +\sum_{i=1}^r B_i K_i(t) C_i x(t) 
    \end{align}
    is   asymptotically stable?"
\end{itemize}

This paper aims to provide  a time-varying matrix solution and controller design method to the Brockett stabilization problem with a structured time-dependent diagonal gain matrix $K(t) = \text{diag}(K_i(t))$.




\subsection{Background  and relevant literature}
Output feedback stabilization has been a classical problem in the development of linear control theory. Early development and solutions of \textit{static} output feedback include the decision method  and   algebraic geometry method \cite{anderson1975output, anderson1977output},  Lyapunov   and linear matrix inequality method \cite{boyd1994linear}, and polynomial function method \cite{byrnes1984output} etc., which were reviewed in \cite{syrmos1997static}. The problem of \textit{decentralized} stabilization, where the output gain matrix $K$ has structured conditions such as a (block) diagonal matrix, was first discussed in the seminal paper \cite{wang1973stabilization} that presented conditions for decentralized stabilizability. The development of decentralized stabilization is also rich in the literature; see e.g., \cite{ravi1995decentralized} on decentralized pole placement and stabilization, and \cite{davison2020decentralized} for the most recent update on decentralized   control systems. 

The Brockett time-varying stabilization problem \cite{brockett1999stabilization} suggests the  means of using   \textit{memoryless} output feedback with \textit{time-varying} gain matrix for system stabilization, which is  one of the challenging open problems in systems and control as documented in \cite{blondel2012open}.  
In the general setting with no diagonal structure constraint on $K(t)$ of the Brockett stabilization problem, there are several partial solutions reported in the literature  \cite{moreau2000note, allwright2005note, boikov2005brockett}. These solutions involve either periodic \textit{scalar} control gains \cite{moreau2000note, allwright2005note, leonov2010stabilization}, or some very strong conditions on the system matrix \cite{boikov2005brockett}, which may not be satisfied in practice. 
A more recent review on the linear time-invariant system stabilization problem and available solutions to output stabilization control is presented in \cite{shumafov2019stabilization}. However, a solution to the Brockett decentralized stabilization under a structural gain matrix $K(t)$ still remains open.








\subsection{Solutions proposed in this paper}
This paper presents a first attempt to solve the Brockett decentralized stabilization problem under a time-varying diagonal gain matrix. The proposed solution has its root in the stabilization control of minimum-phase linear systems \cite{isidori2017stabilization}, while we incorporate several analysis tools including the properties of special matrices, stability conditions of diagonal-dominant linear systems, and solution bounds of linear time-varying integro-differential systems. The key matrix condition shows that if the system matrix product $CB$ is a Hurwitz H-matrix, then there exists a time-varying diagonal gain matrix $K(t)$ such that the closed-loop minimum-phase linear system is exponentially convergent. We also present several easy-to-verify lower bounds to guarantee the existence of a time-varying gain $K(t)$ for system stabilization. 

The paper is organized as follows. Section~\ref{sec:formulation} provides a formal formulation of the time-varying stabilization problem under a diagonal gain $K(t)$. Preliminaries and supporting lemmas are provided in Section~\ref{sec:preliminary}. The main result with the key matrix condition is discussed in Section~\ref{sec:main_result}, with a detailed proof of the time-varying matrix solution. Section~\ref{sec:discussion} gives a review and discussion of other solutions to the Brockett stabilization problem, followed by conclusions in Section~\ref{sec:conclusion} that close this paper. 

\section{Problem formulation} \label{sec:formulation}
Motivated by the Brockett stabilization problem reviewed in Introduction, we formally formulate the stabilization problem as follows. 
Consider the following multivariable multi-input multi-output (MIMO) linear  system with $m$  local control stations:
\begin{align} \label{eq:linear_system}
    \dot x(t) &=  Ax(t) + \sum_{i=1}^m B_i u_i(t), \nonumber \\
    y_i(t) & =   C_i x(t),
\end{align}
where $x \in \mathbb{R}^n$ is the state,  $u_i \in \mathbb{R}$ and $y_i \in \mathbb{R}$  are, respectively, the input and output of the $i$-th local control station; $A\in \mathbb{R}^{n \times n}$ is the system matrix, and $B_i \in \mathbb{R}^{n \times 1}$ and $C_i  \in \mathbb{R}^{1 \times n}$ are constant vectors for each stabilization channel. We define 
$u = [u_1, u_2, \cdots, u_m]^T \in \mathbb{R}^{m}$ as the control input vector, and $y = [y_1, y_2, \cdots, y_m]^T \in \mathbb{R}^m$ as the output vector. 
The stabilization control law  is  a memoryless output feedback control
\begin{align} \label{eq:output_feedback}
    u(t) =  Ky(t),
\end{align}
where $K(t) = \text{diag}(k_i(t)) \in \mathbb{R}^{m \times m}$ is the (possibly time-varying) control gain matrix. Now we define 
\begin{align}
    B = [B_1, B_2. \cdots, B_m]\in \mathbb{R}^{n \times m}, C =  \left[\begin{array}{c} C_1 \\ C_2 \\ \vdots \\C_m  \end{array} \right] \in \mathbb{R}^{m \times n}.
\end{align} 
The linear MIMO square system \eqref{eq:linear_system} with the output feedback  control \eqref{eq:output_feedback} results in a closed-loop system 
\begin{align}  \label{eq:closed_loop}
    \dot x(t) &=  Ax(t) + \sum_{i=1}^m B_iK_iC_i x(t) \nonumber \\
    &= Ax(t) + BKC x(t) = (A + BKC) x(t),
\end{align}
and we denote $A_K = A + BKC$ for later use. In this paper, we will study the Brockett stabilization problem with structured time-dependent gain matrix $K(t) = \text{diag}(k_i(t))$. With the gain matrix $K(t)$ in a diagonal form, we call the control of Eq.~\eqref{eq:output_feedback} \textit{decentralized} output feedback. First, we make the following assumption on the system matrices. 

\begin{assumption}
 The system matrices $B$ and $C$ are of full rank, i.e., $\text{rank}(B) = \text{rank}(C) = m$; and the matrix product $CB$ is non-singular, i.e., $\text{rank}(CB) = m$. 
\end{assumption}

\begin{remark}
In decentralized feedback control, each sub-system (called ``agent") in a local control station uses independent channel for the feedback stabilization, which implies that the columns of $B$ and the rows of $C$ are, respectively, linearly independent that justifies the rank conditions. The rank condition $\text{rank}(CB) = m$ is the same to that the MIMO system realization is of relative degree one. 
\end{remark}
 
In this paper we aim to address the following problem.
\begin{problem} Find matrix conditions and design output feedback control laws to solve the Brockett decentralized stabilization problem with a time-varying diagonal gain matrix $K(t)$. 
\end{problem}

\section{Preliminaries and supporting results} \label{sec:preliminary}
\subsection{Notations}
The notations in this paper are fairly standard.      The notation   $I_n$ denotes an $n \times n$ identity matrix.   For a vector $x \in \mathbb{R}^n$, the notation $\|x\|_1$ denotes the vector 1-norm, i.e., $\|x\|_1 = \sum_{i=1}^n |x_i|$. By default, the notation $\|x\|$ for a vector $x \in \mathbb{R}^n$ is interpreted as the 2-norm, unless otherwise specified. We use `$\text{diag}(\cdot)$' (resp. `$\text{blk-diag}(\cdot)$') to denote  a diagonal (resp. block diagonal) matrix.  

 Consider a function $f(t) : \mathbb{R}_{\geq 0} \to \mathbb R$ that is locally integrable. Given a fixed $p\in (0, \infty)$, we say that $f(t)$ belongs to the $\mathcal{L}^p$ space (i.e., $f(t) \in \mathcal{L}^p$) if $\int_0^{\infty} |f(s)|^p ds < \infty$.

\subsection{Special matrices}
We present definitions of certain special matrices which will be frequently used in this paper. All matrices discussed in this paper are real-valued matrices.

\begin{itemize}
     \item A real square matrix $A \in \mathbb{R}^{n \times n}$ is   \textbf{Hurwitz}  if all its eigenvalues have negative  parts.  
    \item A real square matrix $A \in \mathbb{R}^{n \times n}$ is called an M-matrix, if its non-diagonal entries are non-positive and its  eigenvalues have positive real parts. 
\item 
A real  square matrix $A  = \{a_{ij}\}\in \mathbb{R}^{n \times n}$ is   \textbf{generalized   \textit{\textbf{row}}-diagonal dominant}, if there exists $x = (x_1, x_2, \cdots, x_n) \in \mathbb{R}^n$ with $x_i >0$, $\forall i$, such that
\begin{align}
    |a_{ii}| x_i > \sum_{j=1, j \neq i}^{n} |a_{ij}|x_j, \forall i = 1, 2, \cdots, n.
\end{align}

\item A real  square matrix $A  = \{a_{ij}\} \in \mathbb{R}^{n \times n}$ is   \textbf{generalized   \textit{\textbf{column}}-diagonal dominant}, if there exists $x = (x_1, x_2, \cdots, x_n) \in \mathbb{R}^n$ with $x_i >0$, $\forall i$, such that
\begin{align}
    |a_{jj}| x_j > \sum_{i=1, i \neq j}^{n} |a_{ij}|x_i, \forall j = 1, 2, \cdots, n.
\end{align}


\item (\textbf{Comparison matrix and $H$-matrix})
For a real matrix $A = \{a_{ij}\} \in \mathbb{R}^{n \times n}$, we associate it with a  comparison matrix $M_A = \{m_{ij}\} \in \mathbb{R}^{n \times n}$, defined by 
\begin{align}
    m_{ij} =   \left\{
       \begin{array}{cc}
       |a_{ij}|,  &\text{  if  } \,\,\,\,j  = i;  \\ \nonumber
       -|a_{ij}|,  &\text{  if  } \,\,\,\, j  \neq i.   \nonumber  
       \end{array}
      \right.
\end{align}
A given matrix $A$ is called an \textbf{H-matrix} if its comparison matrix $M_A$ is an M-matrix.

\end{itemize}

Apparently, the set of M-matrices is a subset of H-matrices.   For  readers' convenience,  a more comprehensive survey on relevant matrices in the stability and stabilization analysis for linear time-invariant/time-varying systems is presented in   \cite{Zhiyong_diagonal_2023} and   \cite{sun2023note}. 
The following more general result  shows the equivalence between  generalized row-diagonal dominance,   generalized column-diagonal dominance, and  H-matrices.  

\begin{theorem} \label{theorem:_H_matrix}
Given a matrix $A = \{a_{ij}\} \in \mathbb{R}^{n \times n}$, the following statements are equivalent. 
\begin{enumerate}  
 \label{claim:H_matrix}
    \item $A$ is an $H$-matrix;
    \item $A$ is generalized \textit{row}-diagonal dominant;
    \item There exists a positive diagonal matrix $\bar D = \text{diag}\{\bar d_1, \bar d_2, \cdots, \bar d_n\}$, such that $\bar D^{-1} A \bar D$ is row-diagonal dominant; 
    \item $A$ is generalized \textit{column}-diagonal dominant;
    \item There exists a positive diagonal matrix $\tilde D = \text{diag}\{\tilde d_1, \tilde d_2, \cdots, \tilde d_n\}$, such that $\tilde D A \tilde D^{-1}$ is column-diagonal dominant. \QEDB
\end{enumerate}   
\end{theorem}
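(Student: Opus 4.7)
The plan is to reduce every statement to the classical characterization of non-singular M-matrices: a real matrix $M$ with non-positive off-diagonal entries is an M-matrix if and only if there exists a strictly positive vector $x \in \mathbb{R}^n$ such that $Mx > 0$ componentwise. I will treat this as a standard result (e.g., from the Berman--Plemmons theory of M-matrices, already referenced in the paper as \cite{Zhiyong_diagonal_2023}) and build the five equivalences around it.

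For (1)$\Leftrightarrow$(2): by construction the comparison matrix $M_A$ has non-positive off-diagonal entries, so $A$ is an H-matrix iff $M_A$ is an M-matrix iff there exists $x>0$ with $M_A x > 0$. Writing the $i$-th component of $M_Ax$ explicitly yields $|a_{ii}|x_i - \sum_{j\neq i}|a_{ij}|x_j > 0$, which is exactly generalized row-diagonal dominance. For (2)$\Leftrightarrow$(3), given such an $x$, I would set $\bar D = \text{diag}(x_1,\ldots,x_n)$; the $(i,j)$ entry of $\bar D^{-1}A\bar D$ is $a_{ij}x_j/x_i$, so ordinary row-diagonal dominance of $\bar D^{-1}A\bar D$ reads $|a_{ii}| > \sum_{j\neq i}|a_{ij}|x_j/x_i$, which after multiplying through by $x_i$ is precisely the generalized condition. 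The converse direction simply reads off $x_i = \bar d_i$ from an existing $\bar D$.

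For (1)$\Leftrightarrow$(4) and (4)$\Leftrightarrow$(5), I would exploit the transpose. Note that $M_{A^T} = (M_A)^T$, and a matrix is an M-matrix iff its transpose is (both the sign pattern of off-diagonal entries and the spectrum are preserved). Hence $A$ is an H-matrix iff $A^T$ is an H-matrix. Since generalized column-diagonal dominance of $A$ is the same as generalized row-diagonal dominance of $A^T$, applying the already-established (1)$\Leftrightarrow$(2) to $A^T$ gives (1)$\Leftrightarrow$(4). The equivalence (4)$\Leftrightarrow$(5) is obtained in the same way as (2)$\Leftrightarrow$(3), this time with the scaling $\tilde D A \tilde D^{-1}$ whose $(i,j)$ entry is $\tilde d_i a_{ij}/\tilde d_j$, so column-diagonal dominance of the scaled matrix translates exactly into the generalized column-diagonal dominance inequality with $x_i = \tilde d_i$.

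The only step that is not a direct algebraic rearrangement is the invocation of the positive-vector characterization of M-matrices. That lemma is classical but non-trivial to prove from scratch (it uses, e.g., the monotonicity $M^{-1}\geq 0$ or a Perron--Frobenius argument on $sI-M$); I would cite it rather than reprove it. Everything else is bookkeeping: choosing $\bar D$ or $\tilde D$ as the diagonal matrix whose entries are the components of the positive vector provided by the M-matrix characterization, and rewriting the componentwise inequalities accordingly.
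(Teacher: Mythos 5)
Your proposal is correct. Note that the paper itself does not prove Theorem~\ref{theorem:_H_matrix}; it defers the proof entirely to the reference \cite{sun2021distributed}, so there is no in-paper argument to compare against. Your route --- reducing everything to the semipositivity characterization of non-singular M-matrices (a Z-matrix $M$ is a non-singular M-matrix iff $Mx>0$ for some $x>0$), reading off (1)$\Leftrightarrow$(2) componentwise from $M_A x>0$, obtaining (3) and (5) by the diagonal similarity with $\bar d_i = x_i$, and transferring to the column versions via $M_{A^T}=(M_A)^T$ and the invariance of the M-matrix property under transposition --- is the standard and essentially canonical way to establish these five equivalences, and every algebraic step you describe checks out. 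Citing the positive-vector characterization rather than reproving it is appropriate; it is one of the classical equivalent conditions in the Berman--Plemmons catalogue and proving it from scratch would require a Perron--Frobenius or inverse-positivity argument that is out of proportion to the role the theorem plays here.
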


The proof is presented in \cite{sun2021distributed}. Applying the   Gershgorin circle theorem \cite{roger1994topics}, we immediately obtain the following result as a direct consequence of Theorem~\ref{theorem:_H_matrix}. 

\begin{proposition} \label{pro:H_Hurwitz}
Let $B =\{b_{ij}\} \in \mathbb{R}^{n\times n}$ be an $H$-matrix.  Then $B$ is non-singular. Further suppose that all diagonal entries of $B$ are negative, i.e., $b_{ii} <0, \forall i$. Then all of its eigenvalues have negative real parts; i.e.,   $B$ is a   Hurwitz matrix.   \QEDB
\end{proposition}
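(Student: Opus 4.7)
The plan is to invoke Theorem~\ref{theorem:_H_matrix} to rewrite the H-matrix hypothesis in a form (strict diagonal dominance after diagonal scaling) that plays directly into the Gershgorin circle theorem, and then read off both non-singularity and the Hurwitz property from the location of the Gershgorin disks.

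First I would handle non-singularity. Since $B$ is an H-matrix, Theorem~\ref{theorem:_H_matrix} (item 3) yields a positive diagonal matrix $\bar D$ such that $\tilde B := \bar D^{-1} B \bar D$ is strictly row-diagonal dominant. Diagonal similarity preserves the diagonal entries, so $\tilde b_{ii} = b_{ii}$, and strict row dominance reads
\begin{equation*}
|b_{ii}| > \sum_{j\neq i} |\tilde b_{ij}|, \qquad i = 1,\dots,n.
\end{equation*}
In particular $b_{ii}\neq 0$, and Gershgorin's theorem places every eigenvalue of $\tilde B$ in the union of the disks $\{z : |z - b_{ii}| \le \sum_{j\neq i}|\tilde b_{ij}|\}$, none of which contains the origin. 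Hence $\tilde B$, and therefore $B$, is non-singular.

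Next, under the extra hypothesis $b_{ii} < 0$ for all $i$, I would use the same Gershgorin disks to show that they all lie strictly in the open left half-plane. Indeed, each disk is centered at the real number $b_{ii} < 0$ with radius $r_i := \sum_{j\neq i}|\tilde b_{ij}| < |b_{ii}| = -b_{ii}$; for any point $z$ in such a disk, $\mathrm{Re}(z) \le b_{ii} + r_i < 0$. Since $B$ and $\tilde B$ are similar, they share eigenvalues, so every eigenvalue of $B$ has strictly negative real part, i.e., $B$ is Hurwitz.

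There is essentially no hard step here: the whole argument is a direct chaining of Theorem~\ref{theorem:_H_matrix} with Gershgorin. The only point one has to be slightly careful about is to apply Gershgorin to the \emph{scaled} matrix $\bar D^{-1} B \bar D$ (so that strict dominance is available) while noting that the diagonal entries, and hence the disk centers, are unchanged by the diagonal similarity; everything else is immediate.
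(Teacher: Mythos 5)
Your proof is correct and follows exactly the route the paper indicates: it derives the result as a direct consequence of Theorem~\ref{theorem:_H_matrix} (diagonal scaling to strict diagonal dominance, which preserves the diagonal entries) combined with the Gershgorin circle theorem. The paper gives no further detail than this, so your write-up is simply a fleshed-out version of the paper's own argument.
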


\subsection{Matrix measure and its applications to diagonal dominant time-varying  systems} 
The matrix measure (or ``logarithmic norm") plays an important role in bounding the solution of differential equations.  We introduce the definition and some properties of matrix measure from \cite{desoer1975feedback} as follows.

\begin{definition}(Matrix measure)
Given a real $n \times n$ matrix  $A$, the  matrix measure   $\mu(A)$ is defined as 
\begin{align}
    \mu(A) = \text{lim}_{\epsilon \downarrow 0} \frac{\|I + \epsilon A\| - 1}{\epsilon},
\end{align}
where $\|\cdot\|$ is a matrix norm on $\mathbb{R}^{n \times n}$ induced by a vector norm $\|\cdot\|'$ on $\mathbb{R}^n$.  \QEDB
\end{definition}

The matrix measure is always well-defined, and can take positive or negative values. Different matrix norms  on $\mathbb{R}^{n \times n}$ induced by a corresponding  vector norm $\|\cdot\|'$ give  rise to different matrix measures.  
In particular,   if the vector norm $\|\cdot\|'$ is chosen as the 1-norm, i.e.,  $\|\cdot\|' = \|\cdot\|_1$, then the induced matrix norm is the column-sum norm, i.e., $\|A\| = \|A\|_{\text{col}} = \text{max}_j \sum_i |a_{ij}|$. The corresponding matrix measure is 
    \begin{align} \label{eq:measure_one_norm}
        \mu(A) = \text{max}_{j =1, 2, \cdots, n} \left(a_{jj} +\sum_{i=1, i\neq j}^n |a_{ij}|\right).
    \end{align}
 
As a direct application of matrix measure in the study of time-varying linear systems,  
we recall the following result (the Coppel inequality \cite{coppel1965stability}) that bounds the solution of a time-varying linear system via matrix measures (see e.g., Chapter 2 of \cite{desoer1975feedback}). 
\begin{lemma} \label{theorem:measure_exponential}
Let $t \rightarrow A(t)$ be a continuous matrix function from $\mathbb{R}^+$ to $\mathbb{R}^{n \times n}$. Then the solution of the time-varying linear system 
\begin{align}
    \dot x(t) = A(t)x(t)
\end{align}
satisfies the inequalities
\begin{align}
    \|x(t_0)\|' e^{- \int_{t_0}^{t} \mu(-A(t'))\text{d}t'}  \leq \|x(t)\|' &  \leq \|x(t_0)\|' e^{\int_{t_0}^{t} \mu(A(t'))\text{d}t'} \nonumber \\
    & \forall t \geq t_0,
\end{align}
where $\|\cdot\|'$ denotes a vector norm that is compatible with the norm in the matrix measure $\mu(A)$.  \QEDB
\end{lemma}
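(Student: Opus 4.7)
The plan is to reduce both inequalities to a single differential inequality for the scalar quantity $v(t) = \|x(t)\|'$, derived directly from the definition of the matrix measure, and then to invoke a Gr\"onwall-type comparison. For the upper bound, I would first use the first-order Taylor expansion of the solution,
\begin{align*}
x(t+h) = \bigl(I + h A(t)\bigr) x(t) + o(h), \quad h \downarrow 0,
\end{align*}
which is valid because $A(\cdot)$ is continuous. Taking the vector norm $\|\cdot\|'$, using its subadditivity together with the compatibility inequality $\|M y\|' \le \|M\|\cdot\|y\|'$, I get
\begin{align*}
v(t+h) \le \|I + h A(t)\|\cdot\|x(t)\|' + o(h).
\end{align*}
Subtracting $v(t)$, dividing by $h$, and letting $h \downarrow 0$, the defining limit of $\mu$ yields the Dini-derivative inequality $D^+ v(t) \le \mu(A(t))\, v(t)$. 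A Gr\"onwall-type comparison lemma for Dini derivatives then produces the desired upper bound $v(t) \le v(t_0)\exp\bigl(\int_{t_0}^t \mu(A(s))\,ds\bigr)$.

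For the lower bound, my plan is to reduce it to the upper bound by time reversal rather than repeating a slightly more delicate inverse-expansion argument. Fix $t_1 > t_0$, and set $y(\tau) = x(t_0 + t_1 - \tau)$ for $\tau \in [t_0, t_1]$. A straightforward chain-rule computation shows that $y$ satisfies the linear time-varying system $\dot y(\tau) = \tilde A(\tau) y(\tau)$ with $\tilde A(\tau) := -A(t_0 + t_1 - \tau)$, which is again continuous. Applying the already established upper bound to $y$, using $y(t_0) = x(t_1)$ and $y(t_1) = x(t_0)$, and performing the change of variables $s = t_0 + t_1 - \tau$ inside the exponential integral, I would obtain
\begin{align*}
\|x(t_0)\|' \le \|x(t_1)\|' \exp\!\left(\int_{t_0}^{t_1} \mu(-A(s))\,ds\right),
\end{align*}
which, after rearrangement, is exactly the claimed left-hand inequality at $t = t_1$.

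The main obstacle, more technical than conceptual, lies in the Gr\"onwall step at the Dini-derivative level: because a general vector norm $\|\cdot\|'$ is typically non-smooth, $v(t)$ is only absolutely continuous, so one cannot just divide by $v$ and integrate $\dot v / v$. The standard remedy is to introduce the auxiliary function $\phi(t) = v(t) \exp\bigl(-\int_{t_0}^t \mu(A(s))\,ds\bigr)$, verify $D^+ \phi(t) \le 0$ using the product rule for upper Dini derivatives of a continuous function times a $C^1$ function, and conclude that $\phi$ is nonincreasing, which delivers the exponential estimate. Once this comparison step is in place, all remaining ingredients — continuity (hence local integrability) of $t \mapsto \mu(A(t))$ implied by continuity of $A(\cdot)$, the norm compatibility used in the very first inequality, and the invariance of the matrix-measure identity under the sign change needed for the time-reversed system — slot in directly.
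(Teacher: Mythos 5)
Your proposal is correct and is essentially the standard proof of the Coppel inequality: the Dini-derivative estimate $D^+\|x(t)\|' \le \mu(A(t))\|x(t)\|'$ from the defining limit of $\mu$, a Gr\"onwall comparison, and time reversal for the lower bound. The paper does not prove this lemma itself but recalls it from the literature (Coppel; Desoer--Vidyasagar, Ch.~2), and your argument matches the one given there, including the correct handling of the non-smoothness of $\|\cdot\|'$ via the auxiliary function $\phi(t)=\|x(t)\|'\exp\bigl(-\int_{t_0}^t\mu(A(s))\,ds\bigr)$.
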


\begin{lemma} \label{lemma:column_exponential}
(Column-diagonal dominant linear system)
Consider a time-varying linear  system $\dot x(t) = A(t)x(t)$, where $A(t)$ is a continuous-time Hurwitz matrix with column-diagonal dominant entries $\forall t \geq t_0$. Then it holds that
\begin{align}
        \|x(t)\|_1 \leq \|x(t_0)\|_1 e^{\int_{t_0}^{t} \alpha_c(t') \text{d}t'}, \forall t \geq t_0, 
\end{align}
where $\alpha_c(t') = \text{max}_{j =1, 2, \cdots, n} \left(a_{jj}(t') +\sum_{i=1, i\neq j}^n |a_{ij}(t')|\right)$ and $\alpha_c(t')<0$. \QEDB
\end{lemma}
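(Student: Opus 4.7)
The plan is to apply Lemma~\ref{theorem:measure_exponential} (Coppel's inequality) with the vector norm chosen as $\|\cdot\|_1$ and identify the induced matrix measure $\mu(A(t))$ with $\alpha_c(t)$.

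First, I would invoke equation~\eqref{eq:measure_one_norm}, which states that the matrix measure induced by the vector 1-norm is exactly
\begin{align*}
    \mu(A(t)) = \max_{j=1,2,\ldots,n}\left(a_{jj}(t) + \sum_{i=1, i\neq j}^n |a_{ij}(t)|\right) = \alpha_c(t).
\end{align*}
Since $t \mapsto A(t)$ is continuous, so is $\alpha_c(t)$, and the upper-bound half of Lemma~\ref{theorem:measure_exponential} directly yields
\begin{align*}
    \|x(t)\|_1 \leq \|x(t_0)\|_1 \, e^{\int_{t_0}^t \mu(A(t'))\,\mathrm{d}t'} = \|x(t_0)\|_1 \, e^{\int_{t_0}^t \alpha_c(t')\,\mathrm{d}t'},
\end{align*}
which is exactly the claimed inequality.

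It then remains to verify the sign statement $\alpha_c(t') < 0$. Since $A(t')$ is column-diagonal dominant, for each column $j$ one has $|a_{jj}(t')| > \sum_{i\neq j}|a_{ij}(t')| \ge 0$, so $a_{jj}(t')\neq 0$. Because $A(t')$ is assumed Hurwitz and column-diagonal dominant simultaneously, the diagonal entries must be negative: indeed, if some $a_{jj}(t')>0$, applying the Gershgorin circle theorem to $A(t')^T$ (whose row sums coincide with the column sums of $A(t')$) would place an eigenvalue in the open right half-plane, contradicting the Hurwitz hypothesis. Hence $a_{jj}(t')<0$ and column-diagonal dominance rewrites as $-a_{jj}(t') > \sum_{i\neq j}|a_{ij}(t')|$, i.e., $a_{jj}(t') + \sum_{i\neq j}|a_{ij}(t')|<0$ for every $j$. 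Taking the maximum over $j$ preserves the strict negativity, giving $\alpha_c(t')<0$.

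The argument is essentially a one-line specialization of Coppel's inequality, so there is no serious obstacle; the only non-automatic piece is the justification that column-diagonal dominance combined with the Hurwitz property forces negative diagonal entries (and hence $\alpha_c(t')<0$), which is handled by the Gershgorin-based sign argument above.
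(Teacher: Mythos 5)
Your proof is correct and follows essentially the same route as the paper: both apply the Coppel inequality of Lemma~\ref{theorem:measure_exponential} with the vector 1-norm, identify the induced matrix measure \eqref{eq:measure_one_norm} with $\alpha_c(t)$, and observe that the Hurwitz plus column-diagonal-dominance hypotheses force $\alpha_c(t')<0$. Your only addition is to spell out the Gershgorin-based sign argument for the diagonal entries, which the paper states without proof.
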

\begin{proof}
Applying Lemma~\ref{theorem:measure_exponential}, and choosing the vector norm as the one-norm with the matrix measure induced by the vector one-norm in \eqref{eq:measure_one_norm}, gives the desired result. Note that $A(t)$ being Hurwitz and column-diagonal dominant implies that $\alpha_c(t')<0$.
\end{proof}

\subsection{Solution bounds of time-varying linear integro-differential systems}
 In this subsection we recall the following conditions for the solution bounds of time-varying linear integro-differential systems, which will be used in the proof of the main results. 
\begin{theorem} \label{thm:exp_int_diff}  (Conditions for exponential convergence of linear integro-differential systems, \cite{ngoc2017new})
Consider the following integro-differential system
\begin{align} \label{eq:integro-differential2}
    \dot x(t) = a(t) x(t) + \int_0^t b(t-s) x(s) \text{d} s.
\end{align}
Suppose $b(t) \in \mathcal{L}^1$ and $a(t) \leq -\gamma, \forall t \geq 0$ with
\begin{align}
    -\gamma + \int_0^\infty b(t) \text{d} t <0.
\end{align}
Then the system \eqref{eq:integro-differential2} is uniformly asymptotically stable. In addition, if there exists a positive constant $\epsilon$ such that 
\begin{align}
    \int_0^\infty |b(t-s)| e^{\epsilon t} \text{d} t < \infty.
\end{align}
Then the solution to \eqref{eq:integro-differential2} is exponentially convergent.  \QEDB

\end{theorem}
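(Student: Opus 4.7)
The plan is to reduce exponential convergence to uniform asymptotic stability via an exponential multiplier, and then prove the latter through a Dini-derivative / Gronwall comparison combined with a Paley--Wiener argument on the resolvent.

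First I would introduce $y(t) = e^{\eta t} x(t)$ for a small $\eta \in (0,\epsilon)$ to be chosen. A direct computation shows that $y$ satisfies the same integro-differential form as \eqref{eq:integro-differential2}, but with modified coefficient $\tilde a(t) = a(t) + \eta \le -\gamma + \eta$ and modified kernel $\tilde b(\tau) = b(\tau) e^{\eta \tau}$. The second hypothesis guarantees $\tilde b \in \mathcal{L}^1$ for every $\eta \le \epsilon$, and by continuity in $\eta$ at the origin the strict inequality $-\gamma + \int_0^\infty b(\tau)\,d\tau < 0$ persists for $\eta$ small enough. Hence, once uniform asymptotic stability is established under the hypotheses of the first part, applying that conclusion to the $y$-system yields $|y(t)| \le M$, i.e. $|x(t)| \le M e^{-\eta t}$.

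For the uniform asymptotic stability claim itself, I would take absolute values and bound the upper Dini derivative as
\[
D^+|x(t)| \;\le\; -\gamma\,|x(t)| + \int_0^t |b(t-s)|\,|x(s)|\,ds.
\]
Variation of constants applied to the stable scalar part $-\gamma$ gives the Volterra inequality
\[
|x(t)| \;\le\; |x(0)|\,e^{-\gamma t} + \int_0^t e^{-\gamma(t-\tau)}\!\int_0^\tau |b(\tau-s)|\,|x(s)|\,ds\,d\tau,
\]
and Fubini collapses the double integral into a single convolution with an explicit $\mathcal{L}^1$ kernel. A standard resolvent / Paley--Wiener argument on the half-line then reduces the problem to showing that the characteristic function $s \mapsto s + \gamma - \widehat{|b|}(s)$ has no zeros in the closed right half-plane; this follows from the strict positivity at $s=0$ given by the hypothesis, together with the Riemann--Lebesgue decay of $\widehat{|b|}(s)$ on a large semicircle and a winding-number count.

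The main obstacle is the mismatch between the hypothesis written with $\int_0^\infty b(t)\,dt$ and the quantity $\int_0^\infty |b(t)|\,dt$ that naturally appears after passing to $|x(t)|$. When $b$ has a fixed sign the two integrals agree and the argument closes cleanly; in the signed case I would instead use a Lyapunov--Krasovskii functional
\[
V(t) \;=\; |x(t)| + \int_0^t \!\!\left(\int_{t-s}^{\infty} |b(\xi)|\,d\xi\right)\!|x(s)|\,ds,
\]
whose time derivative telescopes the memory term to give $\dot V(t) \le \bigl(-\gamma + \int_0^\infty |b(\xi)|\,d\xi\bigr)|x(t)|$, at the price of a mildly strengthened integral condition. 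The time-varying coefficient $a(t)$ itself is not an obstruction, since only the pointwise upper bound $a(t) \le -\gamma$ enters the Dini inequality above, so the whole argument proceeds without requiring $a$ to be constant.
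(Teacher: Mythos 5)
The paper does not actually prove this statement: Theorem~\ref{thm:exp_int_diff} is imported verbatim from the cited reference \cite{ngoc2017new} and used as a black box, so there is no in-paper argument to compare against. Your reconstruction is nonetheless essentially correct and follows the standard route for such results: the exponential multiplier $y(t)=e^{\eta t}x(t)$ correctly transforms the system into one of the same form with coefficient $a(t)+\eta$ and kernel $b(\tau)e^{\eta\tau}$, the dominated-convergence argument (with dominating function $|b(\tau)|e^{\epsilon\tau}$) legitimately lets you push $\eta$ small enough to preserve the strict inequality, and both of your routes to uniform asymptotic stability --- the Volterra/Paley--Wiener resolvent argument and the Lyapunov--Krasovskii functional $V(t)=|x(t)|+\int_0^t\bigl(\int_{t-s}^\infty|b(\xi)|\,d\xi\bigr)|x(s)|\,ds$ --- are sound; the derivative computation for $V$ indeed telescopes the memory term as you claim.

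The one substantive issue is the one you already flag: the hypothesis as printed involves $\int_0^\infty b(t)\,dt$, whereas every step after taking $|x(t)|$ requires $-\gamma+\int_0^\infty|b(t)|\,dt<0$ (e.g., the half-plane zero-freeness of $s+\gamma-\widehat{|b|}(s)$ uses $|\widehat{|b|}(s)|\le\|b\|_{\mathcal{L}^1}<\gamma\le|s+\gamma|$ for $\Re s\ge 0$). For a signed kernel the stated condition is genuinely weaker and your proof does not cover it; this appears to be an imprecision in the paper's transcription of the cited result rather than a gap you could be expected to close. It is harmless where the theorem is actually invoked, since in the proof of Theorem~\ref{thm:gain_condition} the kernel is the nonnegative function $M_{11}\|\tilde A_{21}\|\|\tilde A_{12}\|e^{-\beta_{11}\tau}$, for which the two integrals coincide and your argument closes completely. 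You also silently correct the typo in the third display (the stray $s$ in $\int_0^\infty|b(t-s)|e^{\epsilon t}\,dt$), which is the right reading.
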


The following lemma presents conditions for the exponential convergence of linear time-varying integro-differential systems with exponentially decaying perturbations. The proof, which is omitted here,  follows similar steps as in \cite{ngoc2017new} and the Grönwall's inequality.
\begin{lemma}
Consider the  integro-differential system \eqref{eq:integro-differential2} and its perturbed version with a perturbation term
\begin{align} \label{eq:integro-differential2_pert}
    \dot x(t) = a(t) x(t) + \int_0^t b(t-s) x(s) \text{d} s + f(t),
\end{align}
where $|f(t)| \leq M e^{-\beta t}$ with positive constants $M > 0, \beta > 0 $. If the solution to \eqref{eq:integro-differential2} is exponentially convergent, then the solution to \eqref{eq:integro-differential2_pert} is also exponentially convergent. \QEDB
\end{lemma}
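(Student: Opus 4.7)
The plan is to express the perturbed solution in terms of the resolvent of the unperturbed integro-differential operator, and then combine the two exponential bounds (one for the resolvent, one for the forcing term $f$) via a direct estimate of a convolution of decaying exponentials, using Gr\"onwall's inequality to clean up the critical resonant case. Throughout, I will denote by $x_h(t)$ the solution of the unperturbed system \eqref{eq:integro-differential2} with $x_h(0)=x(0)$, and by $x(t)$ the solution of \eqref{eq:integro-differential2_pert} with the same initial condition.

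First, I would invoke the variation-of-constants representation for linear Volterra integro-differential equations: there exists a resolvent kernel $R(t,s)$ associated with \eqref{eq:integro-differential2} such that
\begin{equation*}
x(t) \;=\; x_h(t) \;+\; \int_0^t R(t,s)\, f(s)\,\mathrm{d}s.
\end{equation*}
By the hypothesis that solutions of \eqref{eq:integro-differential2} are exponentially convergent, and by the linearity of the kernel, one obtains a bound of the form $|x_h(t)| \leq C_0 |x(0)| e^{-\alpha t}$ and $|R(t,s)| \leq C_1 e^{-\alpha(t-s)}$ for some positive constants $C_0, C_1, \alpha$ (this decay of $R(t,s)$ is the content of the resolvent argument in \cite{ngoc2017new}, obtained by applying Theorem~\ref{thm:exp_int_diff} to a shifted, forced equation).

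Substituting into the representation and using $|f(s)|\leq M e^{-\beta s}$ gives
\begin{equation*}
|x(t)| \;\leq\; C_0 |x(0)| e^{-\alpha t} \;+\; C_1 M \int_0^t e^{-\alpha(t-s)} e^{-\beta s}\,\mathrm{d}s.
\end{equation*}
A direct evaluation of the convolution yields an exponential bound with rate $\min(\alpha,\beta)$ whenever $\alpha\neq\beta$, and a bound of the form $C' t e^{-\alpha t}$ in the resonant case $\alpha=\beta$, which is still dominated by $C'' e^{-\alpha' t}$ for any $\alpha'<\alpha$. In either case, $|x(t)|$ decays exponentially, which is the desired conclusion. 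If one prefers to avoid invoking a resolvent kernel explicitly, an alternative route is to use $a(t)\leq -\gamma$ and the triangle inequality to majorize $|x(t)|$ by the solution of a scalar integro-differential inequality
\begin{equation*}
\dot v(t) \;\leq\; -\gamma\, v(t) \;+\; \int_0^t |b(t-s)|\, v(s)\,\mathrm{d}s \;+\; M e^{-\beta t},
\end{equation*}
and then apply the generalized Gr\"onwall inequality for Volterra kernels, exploiting $b\in\mathcal{L}^1$ and the strict inequality $-\gamma + \int_0^\infty b(t)\,\mathrm{d}t < 0$ that underlies Theorem~\ref{thm:exp_int_diff}.

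The main obstacle I expect is establishing the exponential decay of the resolvent kernel $R(t,s)$ uniformly in the second argument (or, equivalently, the uniform-in-initial-time version of Theorem~\ref{thm:exp_int_diff}). Mere asymptotic decay of $x_h$ for a fixed initial time is in principle weaker than $|R(t,s)|\leq C_1 e^{-\alpha(t-s)}$; however, since the unperturbed equation is time-invariant in its convolution kernel $b(t-s)$ and has only $a(t)$ as a bounded, uniformly negative coefficient, the argument of \cite{ngoc2017new} applies after a time shift, and the uniform decay follows. Once that technical point is absorbed, the remainder of the proof is a short Gr\"onwall/convolution computation.
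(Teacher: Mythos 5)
The paper omits its own proof of this lemma, stating only that it ``follows similar steps as in \cite{ngoc2017new} and the Gr\"onwall's inequality,'' and your proposal---particularly the second route via the scalar comparison inequality $\dot v \leq -\gamma v + \int_0^t |b(t-s)|v(s)\,\mathrm{d}s + Me^{-\beta t}$ and a generalized Gr\"onwall argument---is essentially a correct fleshing-out of exactly that sketch. You also rightly flag the one real subtlety (that exponential convergence from a fixed initial time is a priori weaker than the uniform resolvent bound $|R(t,s)|\leq C_1 e^{-\alpha(t-s)}$ needed for the variation-of-constants route) and correctly note that it is absorbed by the uniform-in-$t_0$ hypotheses $a(t)\leq-\gamma$ and the convolution structure of $b$.
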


\section{Main result} \label{sec:main_result}
In this section, we present the main result that involves a key matrix condition to solve the Brockett decentralized stabilization problem. 

\begin{theorem} \label{thm:gain_condition}
Suppose the system \eqref{eq:linear_system} with $(A, B, C)$ is of minimum phase and consider a time-varying decentralized output feedback $u(t) = K(t) y(t)$, with time-varying diagonal matrix gain $K(t)$. 
 
Suppose the matrix $CB$ is \textbf{Hurwitz H-matrix}.  Then there exist individual  positive constants $\bar k_i, i = 1,2, \cdots n$ and finite time $\bar t$ such that if the time-varying  scalar gain functions $k_i(t)$ are chosen to satisfy $k_i(t) \geq \bar k_i >0, \forall t \geq \bar t$, the closed-loop system is exponentially convergent  with the decentralized output feedback gain $K(t) = \text{diag}\{k_1(t), k_2(t), \cdots, k_n(t)\}$.  \QEDB
 
\end{theorem}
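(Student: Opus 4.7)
The plan is to reduce the closed-loop system to the normal form of a minimum-phase, relative-degree-one MIMO system, then use the Hurwitz H-matrix structure of $CB$ to drive the output $y=Cx$ exponentially fast via the matrix-measure bound of Lemma~\ref{lemma:column_exponential}, and finally couple this with the zero dynamics through Theorem~\ref{thm:exp_int_diff} on integro-differential systems.

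First, since $\text{rank}(CB)=m$ and the system is minimum phase, I would perform a Byrnes--Isidori coordinate change $x\mapsto(y,z)$ with $y=Cx\in\mathbb{R}^m$ and $z\in\mathbb{R}^{n-m}$ parametrising the zero dynamics. In these coordinates the closed loop \eqref{eq:closed_loop} becomes
\[
\dot y = A_{11}y + A_{12}z + CB\,K(t)\,y,\qquad \dot z = A_{21}y + A_{22}z,
\]
where $A_{22}$ is Hurwitz by the minimum-phase assumption. Second, I would exploit the H-matrix hypothesis. By Proposition~\ref{pro:H_Hurwitz}, a Hurwitz H-matrix $CB$ must have negative diagonal entries; and by Theorem~\ref{theorem:_H_matrix} there is a positive diagonal $\tilde D$, determined by $CB$ alone, such that $\tilde D(CB)\tilde D^{-1}$ is column-diagonal dominant. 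The crucial observation is that post-multiplying $CB$ by an arbitrary positive diagonal $K(t)$ preserves this column-diagonal dominance with the \emph{same} $\tilde D$, since the defining inequalities scale columnwise by $k_j(t)>0$. Setting $\tilde y=\tilde D y$ and $M(t):=\tilde D(A_{11}+CBK(t))\tilde D^{-1}$, the matrix measure \eqref{eq:measure_one_norm} therefore satisfies
\[
\mu_1(M(t))\;\le\; c_0 - \delta\,\min_i k_i(t),
\]
for constants $c_0\in\mathbb{R}$, $\delta>0$ that depend only on $A_{11}$, $CB$ and $\tilde D$.

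Third, I would close the loop through the zero dynamics. Variation of constants gives
\[
z(t)=e^{A_{22}(t-\bar t)}z(\bar t)+\int_{\bar t}^{t} e^{A_{22}(t-s)}A_{21}\,y(s)\,\mathrm{d}s,
\]
and substituting into the $y$-equation produces a linear time-varying integro-differential equation for $\tilde y$ whose kernel $G(t-s):=\tilde D A_{12}e^{A_{22}(t-s)}A_{21}\tilde D^{-1}$ decays exponentially in the induced 1-norm with a rate $\lambda>0$ independent of $K(t)$ (since $A_{22}$ is Hurwitz), and whose forcing term $\tilde D A_{12}e^{A_{22}(t-\bar t)}z(\bar t)$ is exponentially small. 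Applying the Coppel inequality of Lemma~\ref{theorem:measure_exponential} in the 1-norm to the homogeneous part and treating the kernel and forcing as perturbations yields a scalar comparison inequality of the form \eqref{eq:integro-differential2_pert} with drift bounded by $-\gamma:=c_0-\delta\min_i\bar k_i$ and $\int_0^\infty\|G(\tau)\|_1\mathrm{d}\tau\le\eta/\lambda$. Choosing each $\bar k_i$ so that $\min_i\bar k_i>(c_0+\eta/\lambda)/\delta$ makes $-\gamma+\eta/\lambda<0$, and Theorem~\ref{thm:exp_int_diff} together with its perturbed version produces exponential decay of $\|\tilde y(t)\|_1$, hence of $y(t)$. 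Exponential decay of $z(t)$ then follows directly from the variation-of-constants formula, giving exponential convergence of $x(t)$.

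The main obstacle is the passage from the vector integro-differential inequality to a scalar one to which Theorem~\ref{thm:exp_int_diff} applies, while keeping all constants uniform in the time-varying gain $K(t)$. The enabling fact is that the similarity $\tilde D$ can be chosen from $CB$ alone, which decouples the contraction rate $\gamma$ (scaling linearly with $\min_i k_i(t)$) from the kernel $G$ (fixed by $A_{12},A_{22},A_{21},\tilde D$); this separation is what allows the smallness condition in Theorem~\ref{thm:exp_int_diff} to be satisfied simply by taking the thresholds $\bar k_i$ large enough, and it is what forces the use of diagonal H-matrix dominance rather than a generic Lyapunov argument that would tangle the two scales.
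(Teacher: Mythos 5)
Your proposal is correct and follows essentially the same route as the paper's proof: the minimum-phase normal-form decomposition, the diagonal similarity from the H-matrix property of $CB$ making the 1-norm matrix measure of the output block decrease linearly in $\min_j k_j$, variation of constants on the zero dynamics to obtain a scalar integro-differential comparison system, and Theorem~\ref{thm:exp_int_diff} to conclude exponential convergence. The only cosmetic differences are your explicit invocation of the perturbed-system lemma for the forcing term (the paper folds it into the comparison equation directly) and the labelling of the blocks.
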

\begin{proof}
Firstly we follow the system decomposition technique in \cite{kouvaritakis1976geometric, isidori2017stabilization}  to perform a coordinate transform for the multivariable  MIMO linear system \eqref{eq:linear_system} under the minimum-phase condition.  Choose matrices $M \in \mathbb{R}^{n \times (n-m)}, N\in \mathbb{R}^{(n-m)\times n}$, such that 
      \begin{align} \label{eq:matrix_NM}
          NB = 0, CM = 0, N = (M^TM)^{-1}M^T (I_n - B(CB)^{-1}C).
      \end{align}
      Note that $NM = I_{n-m}$. Consider the system coordinate transformation $\bar x = T x$, with the transformation matrix 
      \begin{align}  \label{eq:T}
          T = \left[\begin{array}{c} N \\ C  \end{array} \right], \,\,\,\text{with}\,\,\,T^{-1} = \left[\begin{array}{cc} M, \,\, B(CB)^{-1}  \end{array} \right].
      \end{align}
We obtain the following transformed system matrix 
\begin{align} \label{eq:transform_minimum_phase}
    \bar A_{K(t)} &= T A_{K(t)} T^{-1} \nonumber \\
                  &= T A  T^{-1} + T (BK(t)C)  T^{-1} \nonumber \\
                  & = \left[\begin{array}{cc} NAM & NA B(CB)^{-1} \\ CAM &  CA B(CB)^{-1} \end{array} \right] + \left[\begin{array}{cc} 0 & 0 \\ 0 & CBK(t) \end{array} \right]  \nonumber \\
                  &= \left[\begin{array}{cc} NAM & NA B(CB)^{-1} \\ CAM &  CA B(CB)^{-1} + CBK(t)\end{array} \right]  \nonumber \\
                  & := \left[\begin{array}{cc} \bar A_{11} & \bar A_{12} \\ \bar A_{21} &  \bar A_{22} + CBK(t)\end{array} \right] .
\end{align}
The minimum phase condition implies that the submatrix $\bar A_{11} :=NAM$ (if the system zeros exist) is Hurwitz \cite{kouvaritakis1976geometric, isidori2017stabilization}.

 According to Theorem~\ref{theorem:_H_matrix} and Proposition~\ref{pro:H_Hurwitz}, with the condition that  $CB$ is a \textit{Hurwitz H-matrix},  there  exists a diagonal matrix $D$, such that $D(CB)D^{-1}$ is Hurwitz and column-diagonal dominant. Note that all diagonal entries of $D(CB)D^{-1}$ are negative. 
    
    Now  consider a block diagonal matrix $\bar D = \text{blk-diag}(I, D)$ with the above chosen diagonal matrix $D$ such that 
\begin{align} \label{eq:transform_D}
    \bar A_{DK(t)} &= \bar D \bar A_{K(t)} \bar D^{-1} \nonumber \\
                  & = \left[\begin{array}{cc} \bar A_{11} & \bar A_{12} D^{-1} \\  D \bar  A_{21} &  D \bar A_{22}D^{-1} + D CBK(t) D^{-1}\end{array} \right] .
\end{align}
where $D CBK(t) D^{-1} = D CB D^{-1} K(t)$ since   $D^{-1}$ and $K(t)$  commute as both are  diagonal matrices. With the above two transformations in \eqref{eq:transform_minimum_phase} and \eqref{eq:transform_D}, we obtain the transformed system states 
\begin{align}
    \bar x = \left[\begin{array}{c} \bar x_1 \\ \bar x_2 \end{array} \right] = \bar D T x
\end{align}
with $ \bar x_1 \in \mathbb{R}^{n-m}$ and $\bar x_2 \in \mathbb{R}^{m}$ corresponding to the sub-states after the coordinate transformation. 
The dynamics of the transformed system are given by 
\begin{subequations}
\begin{align}
    \dot {\bar x}_1 &= \bar A_{11} {\bar x}_1 + \bar A_{12} D^{-1} {\bar x}_2,  \label{eq:bar_x1D} \\
    \dot {\bar x}_2 &= D \bar  A_{21} {\bar x}_1 + \left(D \bar A_{22}D^{-1} + (D CBD^{-1}) K(t) \right) {\bar x}_2, \label{eq:bar_x2D}
\end{align}
\end{subequations}
where we recall that $\bar A_{11}$ is Hurwitz and $D CB D^{-1}$is column-diagonal dominant. To ease notations, we now denote $\tilde  A_{12} :=\bar A_{12} D^{-1}$, $\tilde  A_{21} :=D \bar  A_{21}$, and $\tilde  A_{22, K(t)} :=D \bar A_{22}D^{-1} + (D CBD^{-1}) K(t)$. 


The solution to the above sub-system \eqref{eq:bar_x1D} is given in the  form of  
\begin{align} \label{eq:solution_x1}
    {\bar x}_1(t)  = e^{\bar A_{11} t} {\bar x}_1(0) +\int_0^t e^{\bar A_{11} (t-s)} \tilde  A_{12} {\bar x}_2(s) \text{d}  s,
\end{align}
and therefore
\begin{align}
    \dot {\bar x}_2 & = {\left(\tilde  A_{22, K(t)} \right)} {\bar x}_2 \nonumber \\
    & + \tilde  A_{21}  \left( e^{\bar A_{11} t} {\bar x}_1(0) +\int_0^t e^{\bar A_{11} (t-s)} \tilde  A_{12} {\bar x}_2(s) \text{d}  s\right) .
\end{align}
Since $\bar A_{11}$ is Hurwitz, there exist positive constants $M_{11} >0, \beta_{11} >0$ such that \begin{align} \label{eq:A11_EXP}
    \|e^{\bar A_{11} t}\| \leq M_{11} e^{-\beta_{11} t}, M_{11} >0, \beta_{11}>0,\,\,\,\forall t \geq 0.
\end{align}
Let $\mu_1(\tilde  A_{22, K(t)})$ denote the matrix measure of $\tilde  A_{22, K(t)}$ induced by the vector 1-norm. For the evolution of the 1-norm of the solution vector $\bar x_2(t)$, we obtain the following bound inequality of \eqref{eq:y_one_norm} (shown in the next page), where $D^+$ denotes the upper right-hand Dini derivative \cite{khalil1996nonlinear}.


\begin{figure*}
 
\begin{align} \label{eq:y_one_norm}
    D^+ \|{\bar x}_2(t)\|_1 \leq   \mu_1(\tilde  A_{22, K(t)}) \| {\bar x}_2(t)\|_1 + \|\tilde  A_{21}\| \|{\bar x}_1(0)\|  M_{11} e^{-\beta_{11} t}  
      + M_{11} \|\tilde  A_{21} \| \| \tilde  A_{12}\| \left(\int_0^t e^{-\beta_{11} (t-s)} \||{\bar x}_2(s)\|_1 \text{d} s \right) .
\end{align}
\end{figure*}

 Combining all terms of  \eqref{eq:y_one_norm},  we further consider the following scalar system of $z(t)$ in an integro-differential  form
\begin{align} \label{eq:integro-differential_DH}
    \dot z = & \mu_1\left(\tilde  A_{22, K(t)} \right) z(t) + \|\tilde  A_{21}\| \|{\bar x}_1(0)\|  M_{11} e^{-\beta_{11} t} \nonumber \\
    & + M_{11} \|\tilde  A_{21} \| \| \tilde  A_{12}\| \left(\int_0^t e^{-\beta_{11} (t-s)} z(s) \text{d} s \right) .
\end{align}
The comparison lemma for  integro-differential systems (see e.g.,  \cite[Theorem 1.4.2]{lakshmikantham1995theory}) indicates that $\|\bar x_2(t)\|_1 \leq z(t)$ if $\|\bar x_2(0)\|_1 = z(0) >0$. If $z(t)$ is exponentially convergent, then the evolution of $\|\bar x_2(t)\|_1$ is upper bounded by exponentially convergent function, which in turn implies that the solution $\bar x_2(t)$ is also exponentially convergent by the equivalence of the vector norms.
According to Theorem~\ref{thm:exp_int_diff}, a sufficient condition to ensure the exponential convergence of the solution $z(t)$ is that there exists a finite time $\bar t$, such that 
\begin{align} \label{eq:ex_condition_ide}
     \mu_1\left(\tilde  A_{22, K(t)} \right) &=  \mu_1\left(D \bar A_{22}D^{-1} + (D CBD^{-1}) K \right) \nonumber \\
     &< - \gamma, \,\,\,\,\,\,\,\forall t \geq \bar t, 
    \end{align}
    where
    \begin{align} 
      \gamma = M_{11} \|\tilde  A_{21} \| \| \tilde  A_{12}\| \int_0^\infty e^{-\beta_{11}t} \text{d} s = M_{11} \|\tilde  A_{21} \| \| \tilde  A_{12}\|/\beta_{11}.
\end{align}
We claim that with sufficiently large $k_i$, the above integro-differential \eqref{eq:integro-differential_DH}  system is exponentially stable by meeting the matrix measure condition of \eqref{eq:ex_condition_ide}. 
Again, we recall that the matrix condition of $CB$ being an H-matrix implies that there exists a positive   diagonal matrix $ D = \text{diag}\{ d_1,  d_2, \cdots,  d_n\}$ such that $\tilde B = \{\tilde b_{ij}\} := D CB   D^{-1}$ is  strictly  column-diagonal dominant. 

Note  that the diagonal entries of $D CBD^{-1}$ satisfy $\tilde b_{jj} : = ({D CBD^{-1}})_{jj} = ({CB})_{jj}, \forall j$, which are negative by the condition that $CB$ is a Hurwitz H-matrix; i.e., 
\begin{align}
    \tilde b_{jj} &<0, \,\,\,\,\,\forall j; \nonumber \\
    \tilde b_{jj} + \sum_{i=1, i\neq j}|\tilde b_{ij}| &<0, \,\,\,\,\,\forall j.
\end{align}

Let $\tilde a_{ij}$ denote  the $(ij)$-th entry of the matrix $D \bar A_{22}D^{-1}$. 
Now by choosing $\tilde k_j$ such that  
\begin{align}  \label{eq:key_condition2}
     k_j(t) > \tilde k_j  = \frac{\sum_{i=1, i\neq j}  |\tilde a_{ij}| + \tilde a_{jj}   +\gamma}{-\left(\tilde b_{jj} + \sum_{i=1, i\neq j}|\tilde b_{ij}|\right)}, 
\end{align}
it holds that
\begin{align} \label{eq:diagonal_inequality}
     k_j(t)\tilde b_{jj} + \tilde a_{jj}  + \left(\sum_{i=1, i\neq j}  (  |\tilde a_{ij}| + k_j(t)|\tilde b_{ij}|  )  \right) < - \gamma.
\end{align}

Since all entries of the constant matrices $D CBD^{-1}$ and $\tilde B$ are bounded, all $\tilde k_j, \forall j$ are also bounded.  Choose $\tilde k := \text{max}_j \tilde k_j$ and further let $k_j(t) \geq \tilde k, \forall j, \forall t>\bar t$. Then we obtain the key condition of \eqref{eq:key_condition} (shown in the next page), which is exactly the matrix measure condition in \eqref{eq:ex_condition_ide}. By Theorem~\ref{thm:exp_int_diff}, this proves that the $z$ system \eqref{eq:integro-differential_DH}  converges to zero exponentially fast $\forall t >\bar t$. This in turn implies that the transformed system state $\bar x$ of the linear time-varying system \eqref{eq:bar_x1D} and \eqref{eq:bar_x2D} converges to zero exponentially fast, and so is the original system state $x(t)$ with the convergence scaled by the coordinate transform  $x := (\bar D T)^{-1} \bar x$.  
\begin{figure*}
 
\begin{align} \label{eq:key_condition}
    \text{max}_{j =1, 2, \cdots, n} \left(k_j(t)\tilde b_{jj} + \tilde a_{jj}  + k_j(t)\sum_{i=1, i\neq j}|\tilde b_{ij}| + \sum_{i=1, i\neq j}^n |a_{ij}|\right) < -\gamma.
\end{align}
\end{figure*}

      In the case that the system has no zeros (which is equivalent to that the system matrices $B$ and $C$ are   square matrices of full rank), the transformation matrix $T$ can be chosen as $T = C$ and the transformed system matrix  \eqref{eq:transform_minimum_phase} simplifies to 
      \begin{align} \label{eq:transform_simplied}
          \bar A_K &= T A_K T^{-1}  =  C A  C^{-1} + C (BKC)  C^{-1} \nonumber \\
                  & = CA C^{-1}  + CBK  .
      \end{align}
In this case, the above proof can be further simplified by only considering the exponential convergence of the transformed system $\dot {\bar x}_2  =  \left(D \bar A_{22}D^{-1} + (D CBD^{-1}) K(t) \right) {\bar x}_2  $ while  the remaining steps of the proof remain the same. This completes the proof of the theorem. 
 
\end{proof}

\begin{remark} Some remarks of  Theorem~\ref{thm:gain_condition} are in order. 
\begin{itemize}
    \item In   decentralized control of multivariable linear systems, a well-known result on the necessary and sufficient condition for stabilizability (the  existence of a set of local feedback control laws) is that the set of fixed modes (if they exist)  of the system   are stable, i.e., they are located in the open left half plane (see \cite{wang1973stabilization, anderson1981algebraic, liu2019overcoming}). The minimum phase condition (i.e., all system zeros are located in the open left half plane) in Theorem~\ref{thm:gain_condition} follows essentially   the stable fixed-mode condition in decentralized control with memoryless output feedback. 
    \item The proof of Theorem~\ref{thm:gain_condition} also presents a feasible way to find a diagonal gain matrix  $K(t)$  for decentralized stabilization, which is presented in \eqref{eq:key_condition2}. Note the diagonal gain matrix that satisfies \eqref{eq:key_condition2} can also be a constant matrix. 
    \item The H-matrix condition for the product $CB$ resembles a similar key matrix condition of the distributed adaptive stabilization design in \cite{sun2021distributed}, which enables an adaptive gain tuning approach to adaptively update  a suitable diagonal gain matrix  $K(t)$ (e.g., a distributed high-gain control method). This also suggests an alternative solution based on the distributed adaptive again updating method to solve the Brockett stabilization problem, even when the system matrices $A$ and $CB$ are unknown.   \QEDB
\end{itemize}    
 \end{remark}

\section{Discussions and comparisons with other solutions}  \label{sec:discussion}

\subsection{The Brockett stabilization problem with unstructured time-varying $K(t)$} 
In \cite{brockett1999stabilization} Brockett also proposed the following  stabilization problem
\begin{itemize}
    \item ``Given a triple constant matrices $(A, B, C)$ under what circumstances does there exist a time-dependent matrix $K(t)$ such that the system
    \begin{align}
        \dot x(t) = Ax +BK(t)Cx
    \end{align}
    is   asymptotically stable?"
\end{itemize}

In the general case where there is no structural constraints on $K(t)$, the problem still remains open though   some partial solutions (in terms of either necessary or sufficient conditions) are available. In the below, we review some matrix conditions and control design solutions reported in the literature, and compare them with the solution proposed in this paper. 

\subsection{Scalar/matrix gain conditions and time-varying stabilization solutions in the literature}
A common attempt to solve the Brockett stabilization problem with a time-varying gain matrix is to employ periodic scalar gains. For example, the paper \cite{moreau2000note} suggested the following periodic gain function
\begin{align}
    k(t) = k_1 + k_2 \omega \text{sin}(\omega t),  \omega  \gg 1
\end{align}
for a  single-input  single-output (SISO) second-order system, and a similar periodic gain function
\begin{align}
    k(t) = k_1 + k_2 \omega^2 \text{sin}(\omega t),  \omega  \gg 1
\end{align}
for a  third-order system. These methods are based on averaging theory and time-varying coordinates transformation, while the structured gain matrix $K(t)$ was not considered. 

Furthermore,  the paper  \cite{allwright2005note} presented a first step toward the solution of the general Brockett problem  based on periodic and piecewise constant output feedback, while the scalar gain matrix $k(t)$ is designed to switch between two constants. The proposed condition 
for asymptotic  stabilization with periodic output feedback in \cite{allwright2005note} involves the checking of  all eigenvalues of a matrix exponential of state matrices. A modified condition is also proposed in \cite{allwright2005note}  that involves a bilinear matrix inequality. A comprehensive study of non-stationary stabilization of controllable linear systems with periodic control gains is reported in \cite{leonov2010stabilization}.   We note that these conditions, which are limited to SISO systems or lower-order systems,  may also be hard to verify and compute in practice. 

In the general setting of linear MIMO systems with output feedback and time-varying matrix gains, the paper \cite{boikov2005brockett} proposed several matrix conditions involving logarithmic norms and Hurwitz matrix property. One such condition is summarized as below:
\begin{enumerate} 
    \item The time-varying matrix $Q(t) : =A +BK(t)C$ is Hurwitz and diagonal dominant; and
    \item The matrices $B$ and $C$ are square and inevitable.
\end{enumerate}
Then a simple yet trivial result follows for the time-varying gain $K(t)$   as 
\begin{align}\label{eq:strong_condition}
    K(t) =  B^{-1} (Q(t) -A) C^{-1}. 
\end{align}  
These matrix conditions are very strong and restrictive, though they are simply to verify in practice. The trivial solution of \eqref{eq:strong_condition} under the strong matrix conditions still does not consider the structured constraint on $K(t)$, and it does not guarantee  that the obtained gain matrix $K(t)$ is diagonal (or block diagonal) either. 


 
      

 A more recent review on the stabilization problem and available solutions is presented in \cite{shumafov2019stabilization}. Compared to these available design solutions in  \cite{moreau2000note, allwright2005note, boikov2005brockett}  and more recent solutions reviewed in \cite{shumafov2019stabilization}, this paper presents the first step with simple-to-verify matrix conditions that solve the Brockett stabilization problem with structured time-dependent gain matrix $K(t) = \text{diag}(k_i(t))$ in a diagonal form.

\section{conclusion}  \label{sec:conclusion}
In this paper we revisit the Brockett stabilization problem proposed in \cite{brockett1999stabilization}, under a structured condition that the time-varying gain matrix $K(t)$ is a diagonal matrix. This structural stabilization problem has remained open in the literature, while in this paper we present a critical matrix condition to solve this stabilization problem. The key condition is that the system matrix product $CB$ is a Hurwitz H-matrix, which guarantees the existence of a time-varying diagonal gain matrix $K(t)$ such that the closed-loop  minimum-phase linear system with decentralized output feedback is exponentially stable. The proof of the main result involves a close interplay of several analysis tools including the properties of special matrices such as H-matrix, the matrix measure and convergence conditions of diagonal-dominant linear systems, and solution bounds of linear time-varying integro-differential systems. 

The main result can be generalized to the more general structural setting that the output feedback gain matrix $K(t)$ is a  time-varying  \textit{block-diagonal} matrix. In addition,  the H-matrix condition for the product $CB$ also suggests an adaptive control solution (which follows the distributed adaptive stabilization design in \cite{sun2021distributed}) to solve the Brockett stabilization problem with unknown system matrices. These results will be reported in a future paper. 

\bibliography{Brockett_problem}
\bibliographystyle{IEEEtran}

\end{document}